\numberwithin{equation}{section}
\theoremstyle{definition}
\newtheorem{thm}{Theorem}[section]
\newtheorem{defn}[thm]{Definition}
\newtheorem{lemma}[thm]{Lemma}
\newtheorem{cor}[thm]{Corollary}
\newenvironment{sketch}{\emph{Sketch of Proof.}}{\hfill$\square$}
\newenvironment{introthm}[1]
  {\innercustomthm}
  {\endinnercustomthm}
\theoremstyle{definition}
\newcommand{\C}{\mathbb{C}}
\newcommand{\R}{\mathbb{R}}
\let\originalleft\left
\let\originalright\right
\renewcommand{\left}{\mathopen{}\mathclose\bgroup\originalleft}
\renewcommand{\right}{\aftergroup\egroup\originalright}
\newcommand{\auth}[0]{{Renan Assimos, Bal\'azs M\'ark B\'ek\'esi} and Giuseppe Gentile}
\newcommand{\tit}[0]{{Perturbed cone theorems for proper harmonic maps}}
\newcommand{\kw}[0]{{Harmonic maps, foliated maximum principle, convexity, horosphere theorem, cone theorem, Riemannian cone theorem.}}
\title{\tit}
\author{\auth\thanks{Correspondence: \href{mailto:renan.assimos@math.uni-hannover.de}{renan.assimos@math.uni-hannover.de}, \href{https://bmbekesi.github.io/ }{balazs.bekesi@math.uni-hannover.de}, \href{mailto:giuseppe.gentile@math.uni-hannover.de}{giuseppe.gentile@math.uni-hannover.de}}}
\affil{\small Leibniz University Hannover, Welfengarten 1, 30167 Hannover, Germany}
\date{}
\pgfplotsset{compat=1.16}
\begin{document}
\maketitle

\begin{abstract}
    Inspired by the halfspace theorem for minimal surfaces in $\R^3$ of Hoffman-Meeks \cite{HoffmanMeeks}, the halfspace theorem of Rodriguez-Rosenberg \cite{rodriguez1998half}, and the cone theorem of Omori \cite{omori}, we derive new non-existence results for proper harmonic maps into perturbed cones in $\R^n$, horospheres in $\mathbb{H}^n$ and also into perturbed Riemannian cones. The technical tool in use is an extension of the foliated maximum principle appearing in \cite{assimos2019geometry} to the non-compact setting.
\end{abstract}
\textbf{Subject classification:} 53C43, 58E20, 53C12, 53C42, 53A10\medskip\\
\textbf{Keywords: }{Harmonic maps, foliated maximum principle, convexity, horosphere theorem, cone theorem, Riemannian cone theorem.}
\begin{spacing}{0.8}
\tableofcontents
\end{spacing}

\section{Introduction}

The analysis of harmonic maps has a central role in Riemannian geometry. 
The reason for such a centrality is due to the fact that several interesting objects in Riemannian geometry are \textit{harmonic}. 
For instance, minimal submanifolds are immersed submanifolds so that the immersion $f:M\to (N,h)$ is harmonic with respect to the induced metric $g=f^*h$.
Geodesics are yet another prominent example of harmonic maps. Indeed, $\gamma :I\to (N,h)$ where $I$ is an interval in $\R$ is a geodesic if and only if $\gamma$ is a harmonic map. Similarly, one has closed geodesics as harmonic maps $\gamma :S^1\to (N,h)$. These are just a few examples showing that, knowing the existence of harmonic maps as well as their behaviour, allows for applications to several other problems in Riemannian geometry.
\medskip

One of the first systematic approaches to the constructions of harmonic maps between manifolds can be traced back to the groundbreaking work of Eells and Sampson in 1964 \cite{EellsSampson}.
Here, the authors describe a deformation method for producing harmonic maps between closed manifolds, the so-called harmonic map flow.
The harmonic map flow allows for the conclusion of several existence results for harmonic maps in interesting settings, the most classical one being the existence of harmonic maps into Riemannian manifolds with non-positive sectional curvature.
Once the existence of harmonic maps between Riemannian manifolds is given, a naturally arising question is the following:
\begin{quote}
    Which obstructions would prevent the existence of a harmonic map?
\end{quote}

To the best of our knowledge, the first ever attempt to this question, although with a slightly different flavour, is the analysis performed by Omori in 1967.
In \cite{omori} the author shows that the second fundamental form of an isometric immersion inside a non-degenerate cone in $\R^n$ is positive definite at some point.
Less than twenty years later, in \cite{Cone} the authors generalise Omori's work and gain a result about the non-existence of harmonic maps mapping entirely into a non-degenerate cone in $\R^n$, see \cite[corollary 4]{Cone}.

Related to the previously formulated question are several very interesting results and conjectures, mostly formulated in terms of minimal hypersurfaces. 
In 1966 Eugenio Calabi \cite{Calabi} proposed the following conjecture:
\paragraph{Calabi conjecture}
\begin{itemize}
    \item[(i)] Complete (not necessarily proper) minimal hypersurfaces in $\R^n$ are unbounded.
    \item[(ii)] Non-flat complete minimal hypersurfaces in $\R^n$ have unbounded projection in any codimension 2 hyperplane.
\end{itemize}

We will not dive into the details of the Calabi conjecture. 
Nonetheless, it is important to point out that the conjecture, in its full generality, has been proven wrong  by some counterexamples.
For $(i)$ a counterexample has been given by Nadirashvili in \cite{Nadirashvili} where he constructs a minimal hypersurface entirely contained in a sphere. The second claim has been proven wrong by the counterexample provided by Jorge and Xavier in \cite{jorgexavier}.
In particular, the above counterexamples show that the Calabi conjecture can not hold in full generality.
Later on, in \cite{ColdingMinicozzi} Colding and Minicozzi showed that the Calabi conjecture is true if one requires embeddedness. 
Moreover, their work displays a deep connection between the Calabi conjecture and properness. We take this as an inspiration for assuming our harmonic maps to be proper as well.
\medskip

As previously mentioned, in \cite{Cone} 
the authors prove a non-existence result for harmonic maps into non-degenerate cones in $\R^n$. 
How wide can such a cone be?
There are several answers to this question (see e.g \cite{mari}).
Nonetheless, we would like to recall a very interesting result in the theory of minimal surfaces due to Hoffman and Meeks.
\begin{introthm}{1 in \cite{HoffmanMeeks}}
A connected, proper, possibly branched, non-planar minimal surface $M$ in $\R^3$ is not contained in a halfspace. 
\end{introthm}
In particular, this shows that for a special class of harmonic maps (proper minimal) into $\R^3$ the cone can be as wide as the whole halfspace. Interestingly enough, Hoffman and Meeks' result is extremely powerful and holds only for $\R^n$ with $n=3$; indeed, one can construct counterexamples for $n>3$.
\medskip

It is important to notice that the Hoffman and Meeks' halfspace theorem as well as Calabi's conjecture relate to minimal submanifolds of codimension $1$, that is hypersurfaces.

Here is where our work finds its place; that is, providing answers to the following question:

\begin{quote}
    Can we characterize regions $R$ inside the target manifold which would prevent the existence of proper harmonic maps, possibly of higher codimension, mapping entirely inside them?
\end{quote}

\subsection{Statement of the main results}

Our work is based on the extension of the maximum principle in \cite{Sampson1978} and \cite{assimos2019geometry}, which we refer to as the \textit{foliated maximum principle}, which we now state.


\begin{introthm}{\ref{thm_renanito}}[Foliated maximum principle]
Let $(N,h)$ be a complete Riemannian manifold, $\mathcal{F}$ a strictly convex foliation, and  $f:(M,g)\rightarrow (N,h)$ a non-constant proper harmonic map. Let $q=f(p)\in \mathcal{F}$ be a point inside the foliation. Then one of the following holds.
	\begin{enumerate}
		\item The image of $f$ leaves the foliation on the concave boundary $q^*\in \partial \mathcal{F}_{>q}$ within finite distance $d(q,q^*)<\infty$, where the metric $d$ is induced by the Riemannian metric $h|_\mathcal{F}$.
		\item There is a sequence $f(p_n) = q_n\in \mathcal{F}_{>q}$ with $\lim_{n\rightarrow \infty} d(q,q_n) = \infty$. In this case, $\mathcal{F}$ is necessarily unbounded.
	\end{enumerate}
\end{introthm}
We refer the reader to the definitions of the objects cited in the statement in section \ref{sect:2}.
\medskip

With this powerful tool at our disposal, we prove a plethora of results, such as nice obstructions for the existence of harmonic maps into $\R^n$.

\begin{introthm}{\ref{thm:perturbed_wedge}}[Perturbed cone theorem]
Let $C$ be a perturbed cone in $\R^n$. Then every proper harmonic map from a complete Riemannian manifold $(M,g)$ inside a cone region of $C$ is constant.
\end{introthm}
The notion of a perturbed cone in this paper is vastly more general than in the known literature, where only cones with solid angle smaller than $\pi$ are considered. Details are in Definition~\ref{def:eucl_wedge}, but we can motivate the reader with an illustrative example: The graph of $f:\R\longrightarrow \R$; $f(x)=\log(x+1)$ for $x\geq 0$ and $f(x)=0$ for $x<0$ will be a perturbed cone in $\R^2$ with our generalised definition, while the part of $\R^2$ above this graph is going to be called a cone region, whose convex hull is the upper halfspace.\medskip

By looking at the bare minimum needed to prove Theorem \ref{thm:perturbed_wedge}, we give a definition for a perturbed cone in generic Riemannian manifolds.
With such a definition at hand, we prove the following:
\begin{introthm}{\ref{thm:cone_theorem}}[Perturbed Riemannian cone theorem]
	Let $C_{\gamma,r}$ be a perturbed Riemannian cone inside the complete Riemannian manifold $(N,h)$. Then every proper harmonic maps into $C_{r,\gamma}\setminus \partial C_{r,\gamma}$ is constant.
\end{introthm}

The definition of such cone is given in Definition~\ref{defn:riemanniancone} and it is widely general. Assumptions on the curvature of the target manifold are very mild and when $N=\R^n$ our definition allows us to reconstruct several previously known cone theorems in Euclidean spaces.

\subsection{Structure of the paper and notation}

Section~\ref{sect:2} reminds the reader of the definition of a harmonic map, and thereafter the section is devoted to the proof of Theorem~\ref{thm_renanito}. As a simple application, a Liouville-type property for $\R^n$ is obtained.
\medskip

Section~\ref{sect:wedges} is where we move to generalised definitions of \textit{perturbed cones} in the Euclidean space and some more elaborated applications of the foliated maximum principle~\ref{thm_renanito}. In subsection~\ref{PerturbedSec} we prove our second main result, Theorem~\ref{thm:perturbed_wedge}. In subsection~\ref{HoroSec} we employ the Theorem \ref{thm_renanito} to the case of the hyperbolic $n$-space $\mathbb{H}^n$ resulting into a Horosphere theorem~\ref{thm:horoshphere}. \medskip

Finally, in section~\ref{RiemannSec} we collect all the previously obtained results to formalize a definition of perturbed cones in a Riemannian manifold. In particular, here is where we prove our last main result, namely Theorem \ref{thm:cone_theorem}.

\paragraph{Notation}
From now on $(M,g)$ and $(N,h)$ will be complete connected Riemannian manifolds of dimension $m$ and $n$ respectively.
Moreover, $M$ is always considered to be the domain and $N$ the target of the smooth maps which we consider.

\paragraph{Acknowledgement} The authors would like to thank Bill Meeks and Laurent Hauswirth for their discussions and Laurent Mazet for pointing out useful references. This article is a part of the second author's PhD thesis.

\section{Strict convexity and the foliated maximum principle}\label{sect:2}
\subsection{Harmonic maps and foliations}
A smooth map $f:M \rightarrow N$ is said to be \textbf{harmonic} if it is a critical point of the Dirichlet energy
$$ E[f] = \frac{1}{2}\int_M \|df\|^2 \, d\mathrm{Vol}_g. $$
Here, $\|\cdot\|$ represents the induced Hilbert-Schmidt norm on $TM^*\otimes f^*TN$.
An alternative characterization of the harmonicity of $f$ can be obtained through the first variation of the Dirichlet energy, which leads to the elliptic partial differential equation $\Delta\,f = 0$. Here, $\Delta\,f = \mathrm{tr}_g\nabla df$ is the tension field, where $\nabla$ denotes the induced connection on $TM^*\otimes f^*TN$ by the Levi-Civita connections of $g$ and $h$. For more details, see \cite{eellslemaire}.
\\
Some of the most important examples of harmonic maps include harmonic functions, harmonic forms, geodesics, totally geodesic immersion, minimal immersions, (anti)-holomorphic maps between K\"ahler manifolds and special Lagrangians. It is worth noting that any non-existence result proven for a harmonic map automatically holds for the aforementioned examples.

By a \textbf{proper harmonic} map $f:M \rightarrow N$ we mean a harmonic map $f$ which is also  \textbf{proper} in the topological sense, that is, the pre-image of compacts are compact.\footnote{In some communities properness means that the image $f(M)$ has a compact intersection with compact subsets of $N$. We do not use this convention.}

Whenever $M$ is a compact manifold, properness of a harmonic map $f$ is automatic.
Clearly this is not true for non-compact manifolds, as an example one can consider a geodesic from an open interval $(a,b)$.

Another important concept that plays a central role in our work is  \textbf{strict convexity} of a hypersurface $S\subseteq N$.
That is, the second fundamental form $A(X,Y) := (\nabla^N_XY)^\perp$ of $S$ is positive definite.
Taking a family of strictly convex hypersurfaces leads to the following definition.

\begin{defn}
	We define a \textbf{strictly convex foliation} in $N$ as an open, connected and oriented subset $\mathcal{F}\subseteq N$ which is a foliation  whose leaves are connected, embedded, and strictly convex hypersurfaces. Additionally, the foliation satisfies the \textbf{separating property}: every non-boundary leaf $\mathcal{L}$ separates the foliation into at least two connected components. 
\end{defn}

For each leaf $\mathcal{L}$ of a strictly convex foliation $\mathcal{F}$, the orientation determines the choice of the unit normal vector $N$ such that the second fundamental form $A(X,Y) = h(\nabla_XY,N)$ is positive definite. Thus, for any leaf we can talk about its convex and concave side.

\paragraph{Examples of foliations} The following are examples of strictly convex foliations.

    \begin{enumerate}
	\item In $\R^n$ define the annulus $A_{r,R} = \{x\in \mathbb{R}^n \, \mid \, r^2< \|x\|^2 < R^2\}$ for $r<R$. Take a foliation $\mathcal{F}_A$ whose leaves are spheres $S_\rho$ of radius $r< \rho < R$, with their interior representing the convex side.
	\item Let $E$ be a fixed half equator in $S^2$ with round metric, and $\Omega \subset S^2 \setminus E$ be an open set with $\operatorname{dist}(\partial\Omega, E)>0$ . Define a foliation $\mathcal{F}_{S^2\setminus E}$ on $\Omega$ using the boundaries of geodesic balls of radius $(\pi - \varepsilon)/2$, where $0<\varepsilon<\operatorname{dist}(\partial\Omega, E)$, as follows. The centres of these geodesic balls lie on the oriented great circle passing through the midpoint of $E$, its antipodal point, and  intersecting $E$ orthogonally. The leaves of the foliation $\mathcal{F}$ are the subset of the boundaries of these geodesic balls, whose points have positive inner product with the velocity vector of the great circle above. Each leaf is a connected, embedded and strictly convex hypersurface. The separating property clearly holds for the foliation. For more details, see \cite{assimos2020spherical}.
	
	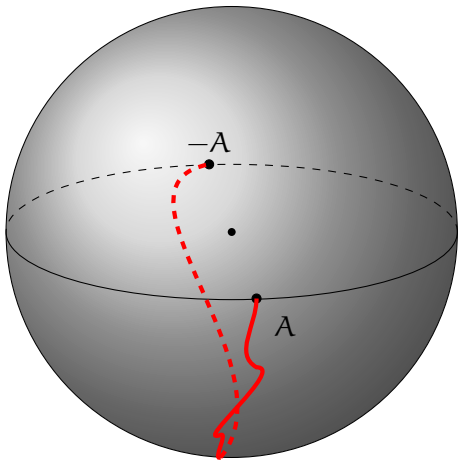
\begin{figure}[h]
		\centering
		\begin{tikzpicture}[scale=1.5]

			\shade[ball color = gray!40, opacity = 0.4] (0,0) circle (2cm);
			\draw (0,0) circle (2cm);
			\draw (-2,0) arc (180:360:2 and 0.6);
			\draw[dashed] (2,0) arc (0:180:2 and 0.6);
			\fill[fill=black] (0,0) circle (1pt);
			
			\draw[fill=black] (0.22,-0.59) circle (0.04) node[below right=0.1cm and 0.1cm] {$A$};
			
			\draw[fill=black] (-0.2,0.6) circle (0.04) node[above] {$-A$};
			
			
			\coordinate (cp1) at (0.22,-1.2);
			\coordinate (cp2) at (-0.1,-1.8);
			
			\draw[red, ultra thick] (0.22,-0.59)
			to[out=-90, in=150]
			(cp1) 
			to[out=0, in=-160]
			(cp2) 
			to[out=20, in=-160]
			(-0.1,-2); 
			\draw[red, ultra thick, dashed] (-0.1,-2) to[out=50, in=-170]
			coordinate[pos=0.5] (midpoint) 
			(-0.2,0.6);

		\end{tikzpicture}
		\caption{A perturbed half equator in $S^2$}\label{perturbedhalfequator}
	\end{figure}
	
	\item A perturbed version of example (b) can be constructed as follows, see Fig~\ref{perturbedhalfequator}. Let $\Gamma$ be a connected curve joining two antipodal points $A$ and $-A$ on $S^2$. Consider an open subset $\Omega$ of $S^2\setminus \Gamma$ with $\operatorname{dist}(\partial\Omega, \Gamma)>\varepsilon$ for some $\varepsilon>0$. We define a foliation $\mathcal{F}_{S^2\setminus \Gamma}$ on $\Omega$ using the boundaries of geodesic balls of radius $(\pi - \varepsilon)/2$ exactly like in the above example. The reader can notice that, as in the case where $\Gamma$ was a half-equator, the separation property also holds. This is because the foliation $\mathcal{F}_{S^2\setminus \Gamma}$ actually foliates $S^2\setminus (B_\varepsilon( A)\cup B_\varepsilon(-A))$, and as soon as we plug in a connected curve from $A$ to $-A$, the separation property automatically holds.
	
\end{enumerate}

\medskip

\paragraph{A partial order}
In the presence of a strictly convex foliation $\mathcal{F}$, one can introduce a relation $p<q$ for points $p,q\in \mathcal{F}$ if $p$ lies on the convex side of the leaf $\mathcal{L}_q$ passing through $q$. The relation $p\leq q$ denotes either $p<q$ or $p$ and $q$ lying in the same leaf. These relations do not define a (strict) partial order on $\mathcal{F}$ since $p \leq q$ and $q \leq p$ does not imply $p=q$. For $q\in \mathcal{F}$, we denote by $$ \mathcal{F}_{>q} = \bigcup_{q<r}\mathcal{L}_r $$ 
the concave side of $\mathcal{L}_q$ and by $\partial\mathcal{F}_{>q} := \partial\overline{(\cup_{q<r}\mathcal{L}_r)}\setminus \mathcal{L}_q$ the concave boundary of $\mathcal{F}_{>q}$.

\begin{figure}[htbp]
    \captionsetup[subfigure]{labelformat=empty}
	\centering
	\begin{subfigure}[b]{0.45\textwidth}
		\centering
		\begin{tikzpicture}[scale=1.25]
			\draw [blue,thick,domain=0:360] plot ({0.25*cos(\x)}, {0.25*sin(\x)});
			\draw [black,domain=0:360] plot ({0.375*cos(\x)}, {0.375*sin(\x)});
			\draw [black,domain=0:360] plot ({0.5*cos(\x)}, {0.5*sin(\x)});
			\draw [black,domain=0:360] plot ({0.625*cos(\x)}, {0.625*sin(\x)});
			\draw [red,thick,domain=0:360] plot ({0.75*cos(\x)}, {0.75*sin(\x)});
			
			\draw [black,domain=-45:45] plot ({0.875*cos(\x)}, {0.875*sin(\x)});
			\draw [black,domain=-45:45] plot ({cos(\x)}, {sin(\x)});
			\draw [black,domain=-45:45] plot ({1.125*cos(\x)}, {1.125*sin(\x)});
			\draw [black,domain=-45:45] plot ({1.25*cos(\x)}, {1.25*sin(\x)});
			\draw [black,domain=-45:45] plot ({1.375*cos(\x)}, {1.375*sin(\x)});
			\draw [black,domain=-45:45] plot ({1.5*cos(\x)}, {1.5*sin(\x)});
			\draw [black,domain=-45:45] plot ({1.625*cos(\x)}, {1.625*sin(\x)});
			\draw [red,thick,domain=-45:45] plot ({1.75*cos(\x)}, {1.75*sin(\x)});

			\draw [black,domain=75:105] plot ({0.875*cos(\x)}, {0.875*sin(\x)});
			\draw [black,domain=75:105] plot ({cos(\x)}, {sin(\x)});
			\draw [black,domain=75:105] plot ({1.125*cos(\x)}, {1.125*sin(\x)});
			\draw [black,domain=75:105] plot ({1.25*cos(\x)}, {1.25*sin(\x)});
			\draw [black,domain=75:105] plot ({1.375*cos(\x)}, {1.375*sin(\x)});
			\draw [black,domain=75:105] plot ({1.5*cos(\x)}, {1.5*sin(\x)});
			\draw [black,domain=75:105] plot ({1.625*cos(\x)}, {1.625*sin(\x)});
			\draw [blue,thick,domain=75:105] plot ({1.75*cos(\x)}, {1.75*sin(\x)});

			\draw [black,domain=25:45] plot ({1.875*cos(\x)}, {1.875*sin(\x)});
			\draw [black,domain=25:45] plot ({2*cos(\x)}, {2*sin(\x)});
			\draw [black,domain=25:45] plot ({2.125*cos(\x)},{2.125*sin(\x)});
			\draw [black,domain=25:45] plot ({2.25*cos(\x)}, {2.25*sin(\x)});
			\draw [black,domain=25:45] plot ({2.375*cos(\x)}, {2.375*sin(\x)});
			\draw [blue,thick,domain=25:45] plot ({2.5*cos(\x)}, {2.5*sin(\x)});

			\draw [black,domain=-25:-45] plot ({1.875*cos(\x)}, {1.875*sin(\x)});
			\draw [black,domain=-25:-45] plot ({2*cos(\x)}, {2*sin(\x)});
			\draw [black,domain=-25:-45] plot ({2.125*cos(\x)},{2.125*sin(\x)});
			\draw [black,domain=-25:-45] plot ({2.25*cos(\x)}, {2.25*sin(\x)});
			\draw [black,domain=-25:-45] plot ({2.375*cos(\x)}, {2.375*sin(\x)});
			\draw [blue,thick,domain=-25:-45] plot ({2.5*cos(\x)}, {2.5*sin(\x)});
			
			\draw [black,domain=-15:15] plot ({1.875*cos(\x)}, {1.875*sin(\x)});
			\draw [black,domain=-15:15] plot ({2*cos(\x)}, {2*sin(\x)});
			\draw [black,domain=-15:15] plot ({2.125*cos(\x)},{2.125*sin(\x)});
			\draw [black,domain=-15:15] plot ({2.25*cos(\x)}, {2.25*sin(\x)});
			\draw [black,domain=-15:15] plot ({2.375*cos(\x)}, {2.375*sin(\x)});
			\draw [red,thick,domain=-15:15] plot ({2.5*cos(\x)}, {2.5*sin(\x)});
			
			\draw [black,domain=5:15] plot ({2.625*cos(\x)}, {2.625*sin(\x)});
			\draw [black,domain=5:15] plot ({2.75*cos(\x)}, {2.75*sin(\x)});
			\draw [black,domain=5:15] plot ({2.875*cos(\x)}, {2.875*sin(\x)});
			\draw [black,domain=5:15] plot ({3*cos(\x)}, {3*sin(\x)});
			\draw [black,domain=5:15] plot ({3.125*cos(\x)}, {3.125*sin(\x)});
			\draw [blue,thick,domain=5:15] plot ({3.25*cos(\x)}, {3.25*sin(\x)});
			
			\draw [black,domain=-15:-5] plot ({2.625*cos(\x)}, {2.625*sin(\x)});
			\draw [black,domain=-15:-5] plot ({2.75*cos(\x)}, {2.75*sin(\x)});
			\draw [black,domain=-15:-5] plot ({2.875*cos(\x)}, {2.875*sin(\x)});
			\draw [black,domain=-15:-5] plot ({3*cos(\x)}, {3*sin(\x)});
			\draw [black,domain=-15:-5] plot ({3.125*cos(\x)}, {3.125*sin(\x)});
			\draw [blue,thick,domain=-15:-5] plot ({3.25*cos(\x)}, {3.25*sin(\x)});
		\end{tikzpicture}
		\caption{A strictly convex foliation}
	\end{subfigure}
	\hfill
	\begin{subfigure}[b]{0.45\textwidth}
		\centering
		\begin{tikzpicture}[scale=1]
			\node[circle, draw, fill=blue, inner sep=3pt] (v1) at (0,0) {};
			\node[circle, draw, fill=red, inner sep=3pt] (v2) at (1,0) {};
			\node[circle, draw, fill=red, inner sep=3pt] (v3) at (2,0) {};
			
			\node[circle, draw, fill=blue, inner sep=3pt] (v4) at (3,1.75) {};
			\node[circle, draw, fill=red, inner sep=3pt] (v5) at (3,0) {};
			\node[circle, draw, fill=blue, inner sep=3pt] (v6) at (3,-1.75) {};
			
			\node[circle, draw, fill=blue, inner sep=3pt] (v7) at (4,-1) {};
			\node[circle, draw, fill=blue, inner sep=3pt] (v8) at (4,1) {};
			\node[circle, draw, fill=blue, inner sep=3pt] (v9) at (1,1) {};
			
			\draw[->, thick] (v1) -- (v2);
			\draw[->, thick] (v2) -- (v3);
			\draw[->, thick] (v2) -- (v9);
			\draw[->, thick] (v3) -- (v4);
			\draw[->, thick] (v3) -- (v5);
			\draw[->, thick] (v3) -- (v6);
			\draw[->, thick] (v5) -- (v7);
			\draw[->, thick] (v5) -- (v8);
		\end{tikzpicture}
		\caption{The associated oriented graph}
	\end{subfigure}
	\caption{A foliation and its leaf space}\label{theduck}
\end{figure}

\paragraph{Leaf space}
    The notion of leaf space for a strictly convex foliation $\mathcal{F}$ can be viewed as a directed graph $G=(V,E)$, where the vertices $V$ correspond to
    
    \begin{enumerate}
        \item separating leaves, i.e. the leaves that produce at least three connected components due to the separating property or
        \item boundary components, i.e. for each separating leaf $\mathcal{L}$ as described in (a) insert a vertex for each component of $\mathcal{F}\setminus \mathcal{L}$ which does not contain a separating leaf.
    \end{enumerate}
    By inserting directed edges $E$ from convex to concave leaves successively, a directed graph is obtained. If there are no separating leaves, then the leaf space consists of two vertices connected by an oriented edge,
    
    The orders $<$ and $\leq$ on $\mathcal{F}$ descend to $G$ and define a strict partial order and a partial order on $G$ respectively. It is important to note that this graph does not contain any cycles; otherwise, the separating property would not hold for those leaves.

\subsection{The foliated maximum principle}

The maximum principle in \cite{assimos2019geometry} is a foliated version of the Sampson's maximum principle and the original result was proven in \cite{assimos2019geometry} for compact domains. 
Here we present an improved version for the non-compact case. 
First, let us recall the statement of the Sampson's maximum principle.

\begin{thm}[Sampson's maximum principle]\label{thm_sampson}
	Let $f:M\rightarrow N$ be a non-constant harmonic map, and let $S$ be a strictly convex hypersurface of $N$ passing through $q = f(p)$. Then for every open neighbourhood $U$ of $p$ in $M$, the image $f(U)$ cannot lie entirely on the convex side\footnote{Note that the convention of the convex side and the concave side are reversed in Sampson's paper compared to ours.} of $S$. 
\end{thm}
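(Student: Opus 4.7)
The plan is to reduce Sampson's maximum principle to Hopf's strong maximum principle applied to the composition $\psi \circ f$ for a carefully constructed auxiliary function $\psi$ defined in a neighbourhood of $q = f(p)$ in $N$. The starting point is the standard composition formula
\[
    \Delta_g(\psi \circ f) \;=\; \operatorname{tr}_g\bigl(\nabla^2 \psi\bigr)(df, df) \;+\; d\psi\bigl(\tau(f)\bigr),
\]
where $\tau(f) = \operatorname{tr}_g \nabla df$ is the tension field; since $f$ is harmonic the second term vanishes, so $\psi \circ f$ is subharmonic whenever $\nabla^2 \psi \geq 0$ on $f(U)$.

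I would construct $\psi$ from the signed distance $\rho$ to $S$, oriented so that $\rho > 0$ on the concave side and $\rho < 0$ on the convex side. A standard computation in a tubular neighbourhood $V$ of $q$ gives $\nabla \rho|_S = N$, $\nabla^2 \rho|_S(X,Y) = -A(X,Y)$ for $X,Y \in TS$, and the normal and mixed components of $\nabla^2 \rho|_S$ vanish. Setting
\[
    \psi \;:=\; -\rho \;+\; c\,\rho^{2}
\]
for a suitably large constant $c > 0$ yields a smooth function with $\psi|_{S \cap V} = 0$, $\psi < 0$ on the convex side and $\psi > 0$ on the concave side of $S$. A direct computation along $S$ shows
\[
    \nabla^2 \psi|_S \;=\; A|_{TS} \,\oplus\, (2c)\, N \otimes N,
\]
which is strictly positive definite because $A > 0$. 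Shrinking $V$ if necessary, continuity yields $\nabla^2 \psi > 0$ throughout $V$.

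Now shrink $U$ so that $f(U) \subset V$ and assume, toward a contradiction, that $f(U)$ lies entirely on the (closed) convex side of $S$. Then $\psi \circ f \leq 0$ on $U$ with equality at $p$, so $\psi \circ f$ attains an interior maximum. Strict positive definiteness of $\nabla^2 \psi$ on $V$ provides a constant $c_0 > 0$ with
\[
    \Delta_g(\psi \circ f) \;\geq\; c_0\, |df|_{g,h}^{2} \;\geq\; 0,
\]
so Hopf's strong maximum principle forces $\psi \circ f \equiv 0$ on the connected neighbourhood $U$. Feeding this back into the previous inequality gives $|df| \equiv 0$ on $U$, so $f$ is constant there. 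Aronszajn's unique continuation theorem for the elliptic harmonic map system then propagates constancy to the whole connected manifold $M$, contradicting the non-constancy of $f$.

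The main obstacle is producing the right auxiliary function $\psi$: the signed distance $\rho$ alone correctly encodes the second fundamental form along $S$, but it is degenerate in the normal direction and so cannot serve directly as a convex auxiliary. The quadratic correction $c\rho^2$ supplies the missing positive curvature transverse to $S$ without perturbing the tangent data on $S$, and calibrating $c$ to make the full Hessian positive definite near $q$ is the only delicate point. Once $\psi$ is in place, the argument is a clean application of Hopf's strong maximum principle together with unique continuation for harmonic maps.
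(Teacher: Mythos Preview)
Your argument follows the same route as the paper's sketch: compose with a locally defined strictly convex function vanishing on $S$, apply the composition formula, and invoke the maximum principle. There is, however, a sign slip in your orientation of $\rho$. With $\rho<0$ on the convex side as you declare, one gets $\psi=-\rho+c\rho^{2}>0$ there, so $\psi\circ f$ would attain an interior \emph{minimum} at $p$, which a subharmonic function is perfectly allowed to do; the contradiction evaporates. If instead you take $\rho>0$ on the convex side (so $\nabla\rho$ points toward the convex side, i.e.\ agrees with the paper's $N$), then all of your intermediate formulas $\nabla\rho|_S=N$, $\nabla^{2}\rho|_S(X,Y)=-A(X,Y)$, and $\nabla^{2}\psi|_S=A|_{TS}\oplus 2c\,N\otimes N$ become correct as written, $\psi<0$ on the convex side near $S$, and the argument goes through.

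Compared with the paper's sketch, you supply two things the paper does not: an explicit construction of the strictly convex auxiliary function (the paper simply posits such a $u$), and a uniform treatment of the degenerate case $df_p=0$ via the strong maximum principle together with Aronszajn unique continuation. The paper instead separates the cases $df_p\neq 0$ and $df_p=0$ and refers the latter to Sampson's original argument. The underlying idea is the same; your version is the more self-contained execution, modulo the sign correction above.
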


\begin{sketch}
	Let $u:V \rightarrow \R$ be a convex function on the open subset $V\subseteq N$ such that $q \in u^{-1}(0)=S\cap V$. The preimage of $(-\infty,0)$ denotes the convex side of $S$ and the preimage of $(0,\infty)$ its concave side. The composition formula for the Laplacian (see, proposition 2.20 in \cite{eellslemaire}) implies $$\Delta_M(u \circ f) = du(\Delta f) + \mathrm{tr}_g\nabla du(df,df) = \mathrm{tr}_g\nabla du(df,df) \geq0$$ where the last inequality follows by the strict convexity of $S$. Suppose that every neighbourhood of $p$ is mapped to the convex side of $S$, i.e. the inequality $\Delta (u\circ f)<0$ is satisfied.
	If $df_p\neq 0$, we obtain that $\Delta_M(u \circ f) > 0$ by the composition formula, thus contradicting the classical maximum principle. For the case $df_p=0$, we refer the reader to \cite{Sampson1978}.
\end{sketch}

\begin{thm}[Foliated maximum principle]\label{thm_renanito}
	Let $(N,h)$ be a complete Riemannian manifold, $\mathcal{F}$ a strictly convex foliation, and  $f:(M,g)\rightarrow (N,h)$ a non-constant proper harmonic map. Let $q=f(p)\in \mathcal{F}$ be a point inside the foliation. Then either
	\begin{enumerate}
		\item the image of $f$ leaves the foliation on the concave boundary $q^*\in \partial \mathcal{F}_{>q}$ within finite distance $d(q,q^*)<\infty$, where the metric $d$ is induced by the Riemannian metric $h|_\mathcal{F}$ or
		\item there is a sequence $f(p_n) = q_n\in \mathcal{F}_{>q}$ with $\lim_{n\rightarrow \infty} d(q,q_n) = \infty$. In this case, $\mathcal{F}$ is necessarily unbounded.
	\end{enumerate}
\end{thm}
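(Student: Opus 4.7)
The plan is to iterate Sampson's maximum principle (Theorem~\ref{thm_sampson}) from $q$ outward, pushing the image of $f$ as far as possible into $\mathcal{F}_{>q}$, and then to exploit completeness of $(N,h)$ together with properness of $f$ to locate the ``furthest reach''. Define
\[
D := \sup\bigl\{\, d(q,r) \,:\, r \in f(M)\cap \overline{\mathcal{F}_{>q}}\,\bigr\},
\]
where $d$ is the intrinsic metric of $(\mathcal{F}, h|_\mathcal{F})$. First I would apply Sampson's principle to the leaf $\mathcal{L}_q$ through $q$: since $f$ is non-constant, the image cannot remain on the closed convex side of $\mathcal{L}_q$, so there is $p' \in M$ with $f(p') \in \mathcal{F}_{>q}$ and hence $D>0$. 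The statement of the theorem is then precisely the dichotomy $D=\infty$ versus $D<\infty$.

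If $D = \infty$, a sequence $q_n = f(p_n) \in \mathcal{F}_{>q}$ with $d(q,q_n) \to \infty$ immediately yields conclusion~(2) and forces $\mathcal{F}$ to be unbounded. Suppose instead $D<\infty$. I pick $q_n = f(p_n) \in \overline{\mathcal{F}_{>q}}$ with $d(q,q_n)\to D$. Since $d$ dominates the Riemannian distance $d_N$ of $N$, the sequence $\{q_n\}$ lies inside the closed $d_N$-ball of radius $D$ around $q$, which is compact by Hopf--Rinow. After extraction $q_n \to q^* \in N$. Properness of $f$ then places a tail of $\{p_n\}$ inside the compact set $f^{-1}(K)$ for $K$ a compact neighbourhood of $q^*$, and a further subsequence gives $p_n \to p^* \in M$ with $f(p^*) = q^*$. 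It remains to show $q^* \in \partial\mathcal{F}_{>q}$: were $q^*$ in the open foliation, Sampson's principle applied to the strictly convex leaf $\mathcal{L}_{q^*}$ at $p^*$ would produce a nearby image point $f(p'')$ strictly on the concave side of $\mathcal{L}_{q^*}$; concatenating a minimising $d$-path from $q$ to $q^*$ (which necessarily approaches $q^*$ from the convex side of $\mathcal{L}_{q^*}$) with a short transversal segment past $\mathcal{L}_{q^*}$ yields $d(q, f(p'')) > D$, contradicting the definition of $D$.

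The main obstacle I expect is the last strict-inequality step, which depends on two subtleties. The first is ruling out the degenerate case of Sampson's principle in which $f$ maps a whole neighbourhood of $p^*$ into the leaf $\mathcal{L}_{q^*}$ itself; this is handled by the strong maximum principle applied to $u\circ f$ for a local convex defining function $u$ of the leaf, forcing $u\circ f$ to be locally constant and reducing the problem to iterating the argument along $\mathcal{L}_{q^*}$ until a point strictly in the concave side is found. The second is upgrading the foliation distance across $\mathcal{L}_{q^*}$ strictly: one uses strict convexity of the leaf together with transversality of minimising $d$-curves at $q^*$, so that any sufficiently short transversal extension past $\mathcal{L}_{q^*}$ strictly lengthens the minimising path. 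Once these two technical points are in place, the dichotomy $D<\infty$ versus $D=\infty$ delivers the two alternatives in the statement.
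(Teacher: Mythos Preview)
Your approach replaces the paper's leaf-order iteration with a distance argument, and this introduces a genuine gap in the $D<\infty$ case. The claimed strict inequality $d(q,f(p''))>D$ does not follow from Sampson's principle applied to $\mathcal{L}_{q^*}$: Sampson only guarantees \emph{some} image point $f(p'')$ on the concave side of $\mathcal{L}_{q^*}$ near $q^*$, with no control over its distance to $q$. Concretely, take the annular foliation of $\R^2$ by concentric circles, $q=(2,0)$ and $q^*=(0,5)$; the point $(0.1,5.001)$ lies strictly on the concave side of the circle $S_5$ yet satisfies $d\bigl(q,(0.1,5.001)\bigr)=\sqrt{28.62}<\sqrt{29}=d(q,q^*)$. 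Your transversality remark does not rescue this: the minimising geodesic from $q$ is indeed transversal to $\mathcal{L}_{q^*}$, but directions tangent to $\mathcal{L}_{q^*}$ at $q^*$ can have negative inner product with the incoming geodesic direction, so one can step onto the concave side while \emph{decreasing} $d(q,\cdot)$. The metric sphere $S_D(q)$ and the leaf $\mathcal{L}_{q^*}$ simply need not agree at $q^*$, and concatenating a path gives only an upper bound for $d(q,f(p''))$, not the lower bound you need.

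The paper sidesteps this entirely by iterating Sampson along the leaf order rather than tracking a metric quantity: one builds $q_k=f(p_k)$ with $\mathcal{L}_{q_1}<\mathcal{L}_{q_2}<\cdots$, and if this monotone sequence accumulates at an interior point $q^*=f(p^*)$ (properness and completeness are used exactly here, as you do), then another application of Sampson at $\mathcal{L}_{q^*}$ yields an image point on a strictly higher leaf, directly contradicting the role of $\mathcal{L}_{q^*}$ as the limit of the monotone leaf sequence. To repair your argument you would have to replace $d(q,\cdot)$ by a function that is monotone with respect to the partial order $<$ on leaves; once you do that, you have essentially reproduced the paper's proof.
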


\begin{proof}	
	The proof is an adaptation of Theorem 3.3 in \cite{assimos2019geometry}. Let $q=f(p)$ be as stated. Since $q$ lies on the leaf $\mathcal{L}_q$ passing through $q$, we can apply the Sampson maximum principle, which allows us to choose $q_1 = f(p_1)$ on the concave side of $\mathcal{L}_q$. We repeat this process of applying Sampson's theorem to each $q_i:=f(p_i)$ to find a point $q_{i+1}$ in the concave side of $\mathcal{L}_{q_i}$. Thus, we obtain two sequences of points $\{p_k\}$ in $M$ and $\{q_k\}$ in $\mathcal{F}$ . 
	\begin{itemize}
		\item Assume that $\{q_k\}$ accumulates to a point $q^*$.
		Due to the completeness of $N$ we conclude that $\{q_k\}$ converges to $q^*\in N$.
		Due to the properness of $f$, we conclude that $\{p_k\}$ is a sequence contained in the compact set $f^{-1}(\{q_k\}\cup \{q^*\})$.
		Due to compactness, one can consider a convergent subsequence $\{p'_k\}$ of $\{p_k\}$ converging to some $p^*$.
		Finally, continuity of $f$ leads to $f(p^*)=q^*$.
		Let now $\mathcal{L}_k$ be the leaf containing $q_k$. The constructed sequence satisfies $\mathcal{L}_1<\mathcal{L}_2<\dots$.
		Set $$ \mathcal{L}^*=\inf\{\mathcal{L}\, \mid\, \mathcal{L}_k<\mathcal{L} \text{ for all }k \in \mathbb{N}, \mathrm{Im}\, f \cap \mathcal{L}\neq \emptyset\},$$ where the infimum is taken with respect to the partial order $\leq$. Notice that $q^*\in \mathcal{L}^*.$ If $q^*\notin \partial\mathcal{F}_{>q}$, we get a contradiction by another application of the Sampson's maximum principle. This implies (a).
		\item Assume that any sequence $\{q_k\}$ constructed via the iterative method above is not convergent. 
		Then the sequence $\{q_k\}$ is divergent and monotone with respect to the order $<$. Thus, the only possibility is $\lim_{n\rightarrow \infty} d(q,q_n) = \infty$.
	\end{itemize}
\end{proof}

\paragraph{Comments on the foliated maximum principle}
Firstly, (a) does not automatically mean that $f$ has to leave the foliation. Only a part of the harmonic map might leave and could even re-enter the foliation later.

Secondly, the completeness of $N$ and properness of $f$ are essential. By $N$ being non-complete, the limit $q^*$ of the constructed sequence might not exist. An example violating properness would be a small piece of a unit-speed geodesic $\gamma:(-\varepsilon,\varepsilon)\rightarrow \mathbb{R}^2$, which can be included in any strictly convex foliation $\mathcal{F}$. A more involved non-proper example is given by the work of Nadirashvili in \cite{Nadirashvili}, where he constructs a non-proper bounded complete minimal disk in $\mathbb{R}^3$.

Lastly, the foliated maximum principle is valid in the context of harmonic sections $s$ of fibre bundles $F\rightarrow M$. Here, the vertical Dirichlet energy $E[s]:= \int_M \|D^Vs\|d\mathrm{Vol}_g$ with $D^Vs$ being the vertical projection of the differential $D$ is used for the definition of harmonicity. In this case, the strictly convex foliation only needs to foliate the vertical directions.

\paragraph{Liouville's theorem} 
As a first illustration of the non-compact foliated maximum principle, we prove a version of Liouville's theorem.

\begin{cor}[Liouville's theorem]\label{cor_liouville}
	The only proper harmonic maps from a complete (not necessarily compact) Riemannian manifold $(M,g)$ into a bounded open subset $B$ of $\mathbb{R}^n$ for $n>1$ are the constant maps. In particular, there are no bounded, proper minimal surfaces in $\mathbb{R}^n$.
\end{cor}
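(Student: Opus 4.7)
The natural strategy is to argue by contradiction via the foliated maximum principle (Theorem~\ref{thm_renanito}), using a foliation of $\R^n$ by concentric spheres around a centre $c$ placed outside $\overline{B}$. Specifically, I would assume that $f\colon(M,g)\to B\subseteq\R^n$ is a non-constant proper harmonic map and, exploiting that $f(M)\subseteq B$ is bounded, pick any $c\in\R^n\setminus\overline{B}$. Setting $\mathcal{F}:=\R^n\setminus\{c\}$, foliated by the spheres $\mathcal{L}_\rho:=\{x\in\R^n:\|x-c\|=\rho\}$ for $\rho>0$ and oriented by the inward unit normal, one has $f(M)\subseteq\mathcal{F}$.

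The first step is to check that $\mathcal{F}$ is a strictly convex foliation in the sense of Section~\ref{sect:2}: each leaf $\mathcal{L}_\rho$ is connected (this uses $n>1$) and embedded, its second fundamental form with the inward normal equals $\rho^{-1}g$ and is thus positive definite, and each leaf separates the open connected set $\mathcal{F}$ into the punctured open ball on its convex side and the unbounded exterior on its concave side. I would then fix any $p_0\in M$, set $q_0:=f(p_0)\in\mathcal{F}$, and invoke Theorem~\ref{thm_renanito}.

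Ruling out both alternatives completes the proof. Alternative~(a) would require $f$ to leave $\mathcal{F}$ through the concave boundary $\partial\mathcal{F}_{>q_0}$ at finite intrinsic distance; but $\mathcal{F}_{>q_0}=\{x:\|x-c\|>\|q_0-c\|\}$ has closure $\{x:\|x-c\|\ge\|q_0-c\|\}$ in $\R^n$, whose topological boundary is exactly $\mathcal{L}_{q_0}$, so $\partial\mathcal{F}_{>q_0}=\emptyset$ and (a) is vacuous. Alternative~(b) produces a sequence $q_n=f(p_n)$ with $\|q_n-c\|\to\infty$, which (since $\R^n\setminus\{c\}$ is intrinsically isometric to $\R^n$ for the purpose of far-distance comparisons when $n>1$) contradicts the boundedness of $B$. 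Hence $f$ must be constant, and the ``in particular'' statement follows because any isometric minimal immersion is harmonic with respect to the induced metric. No serious technical obstacle arises; the two subtleties worth flagging are the correct identification of the (empty) concave boundary and the fact that $n>1$ is exactly what is needed in order for the spheres $\mathcal{L}_\rho$ to be connected leaves.
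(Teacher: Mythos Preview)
Your proof is correct and follows essentially the same approach as the paper's: both foliate by concentric spheres centred at a point outside $\overline{B}$ and invoke Theorem~\ref{thm_renanito}. The only cosmetic difference is that the paper restricts to a bounded annulus $A_{r,R}$ (so that alternative~(b) is excluded immediately and (a) forces the image onto the outer sphere $S_R$, which is impossible since $B$ is open), whereas you take the full punctured space $\R^n\setminus\{c\}$ and rule out~(a) by noting that the concave boundary is empty and~(b) by the boundedness of $B$; both versions yield the same contradiction.
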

\begin{proof}
	By translating $B$, we can assume that $r = \inf_{q\in B}\|q\|>0$. Let $R = \sup_{q\in B}\|q\|$ after a possible translation. Then, $B \subseteq A_{r,R} = \{x\in \mathbb{R}^n \, \mid \, r^2\leq \|x\|^2 \leq R^2\}$, and Theorem \ref{thm_renanito} (a) applies.
\end{proof}

\section{Perturbed cone and Horosphere theorems}\label{sect:wedges}
In this section, we will prove the first two non-existence theorems for proper harmonic maps into specific regions of the Euclidean space and the hyperbolic space.
\subsection{Perturbed cone theorems in $\R^n$}\label{PerturbedSec}
We consider the Euclidean space $\R^n$ with its standard flat metric defined by $ds^2 = dx_1^2 + \dots + dx_n^2$.
Recall furthermore that geodesics in $\R^n$ with the flat metric are given by affine lines, i.e. $\gamma(t) = p + tv$ for $p,v \in \R^n$.

\paragraph{Perturbed cones in Euclidean spaces} The basic example for the definition of a perturbed cone is going to be the classical cone $C$ defined by the graph of $f:\R \rightarrow \R; \, x \mapsto |x|$ inside $\R^2$, henceforth called classical cones. The connected component $R$ of $\R^2 \setminus C$ containing $(0,1)$ has a special property: any $p\in R$ can be enclosed by the cone and a hyperplane, in this case a line. 
In particular, there are no affine lines in $R$.
Such a property seems to be very interesting and, most importantly, crucial for the analysis of harmonic maps.
Thus, we give an appropriate definition for a set possessing such \textit{enclosing property}.

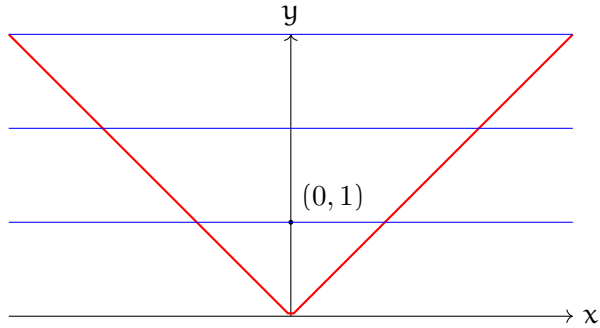
\begin{figure}[htbp]
	\centering
	\begin{tikzpicture}[scale=1.25]
		\draw[->] (-3,0) -- (3,0) node[right]{$x$};
		\draw[->] (0,0) -- (0,3) node[above]{$y$};
		\draw[fill=black] (0,1) circle (0.02) node[above right]{$(0,1)$};
		\draw[red,thick, domain=-3:3, samples=100] plot (\x, {abs(\x)});
		\draw[blue, domain=-3:3, samples=100] plot (\x, {1});
		\draw[blue, domain=-3:3, samples=100] plot (\x, {2});
		\draw[blue, domain=-3:3, samples=100] plot (\x, {3});
	\end{tikzpicture}
	\caption{The graph of $f(x)=|x|$ and enclosing hyperplanes}
	\label{fig:cone}
\end{figure}

\begin{defn}\label{defn:enclosing}
	Let $R\subseteq \R^n$ be a connected open set. $R$ is said to possess the \textbf{enclosing property} if, for every $p\in R$, there is an affine hyperplane $H$ such that the connected component $B$ of $R\setminus H$ containing $p$ is precompact, i.e. $\bar B$ is compact. 
\end{defn}

The blue horizontal lines in Figure \ref{fig:cone} verify that the enclosing property holds for the cone $C$. A non-example is the upper halfspace $\mathbb{H}^2 = \{ (x,y) \in \R^2 \,\mid \, y >0\}$. The enclosing property allows us to extend the notion of a cone to one of a \textit{perturbed cone}. 

\begin{defn}\label{def:eucl_wedge}
Let $C$ be a closed, path-connected subset of $\R^n$ such that $\R^n\setminus C$ consists of at least two connected components. Let $R$ be one of the connected components of $\R^n\setminus C$. If $R$ satisfies the enclosing property, then $C$ is called a \textbf{perturbed cone} in Euclidean space and $R$ a \textbf{cone region}.
\end{defn}

\begin{lemma}
    Let $C$ be a perturbed cone and $R$ a cone region. Then $R$ cannot contain affine lines.
\end{lemma}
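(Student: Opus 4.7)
The plan is to argue by contradiction: assume $R$ contains an affine line $\ell$, pick a point $p \in \ell \subseteq R$, and exploit the enclosing property to derive a contradiction with the precompactness of the enclosing component.

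First I would choose $p \in \ell$ and apply Definition~\ref{defn:enclosing} to obtain an affine hyperplane $H$ such that the connected component $B$ of $R \setminus H$ containing $p$ satisfies $\bar B$ compact. In particular $p \notin H$. Since $\ell$ is an affine line passing through a point not on $H$, either $\ell \cap H = \emptyset$ or $\ell \cap H$ consists of exactly one point $q$ (the line $\ell$ cannot lie in $H$, as otherwise $p \in H$). In both cases I need to produce an unbounded connected subset of $R \setminus H$ that contains $p$, which will then be trapped inside $B$.

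In the first case, $\ell$ lies entirely in one open halfspace of $\R^n \setminus H$, the one containing $p$. Since $\ell \subseteq R$ and $\ell$ is connected and disjoint from $H$, it is contained in the connected component of $R \setminus H$ through $p$, namely $B$. In the second case, write $\ell \setminus \{q\}$ as the union of its two open rays; one of them, call it $\rho$, contains $p$ and lies in the open halfspace determined by $H$ that contains $p$. The ray $\rho$ is connected, contained in $\ell \subseteq R$, and disjoint from $H$, so again $\rho \subseteq B$.

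Either way, $B$ contains either an entire affine line or an entire open ray, both of which are unbounded in $\R^n$. This contradicts the fact that $\bar B$ is compact, and hence the line $\ell$ cannot exist. I do not anticipate any real obstacle: the only subtle point is making sure to separate the two cases $\ell \cap H = \emptyset$ and $\ell \cap H = \{q\}$ so that the connectedness argument placing $\ell$ (respectively $\rho$) inside the single component $B$ of $R \setminus H$ is airtight.
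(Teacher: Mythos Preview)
Your proposal is correct and follows essentially the same approach as the paper: argue by contradiction, split into the cases where the line is parallel to or meets the enclosing hyperplane, and in each case exhibit an unbounded connected subset of $R\setminus H$ through $p$ that must sit inside the precompact component $B$. If anything, you are slightly more careful than the paper in spelling out why the line (resp.\ ray) lands in the \emph{same} connected component $B$ as $p$.
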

\begin{proof}
    Assume there is an affine line $L$ in $R$. Let $p\in L$ and choose a separating hyperplane $H$ and the region $B$ via the enclosing property of $R$. If $L$ is parallel to $H$ then it would be fully contained in $B$ and hence contradicting the precompactness of $B$. If $L$ intersects $H$ then an unbounded part of $L$ would be in $B$ again contradicting the precompactness of $B$. Since $L$ can either be parallel to $H$ or intersect it, we exhausted all the possibilities.
\end{proof}
The enclosing property can be weakened in such a way that the previous lemma does not hold.
In this case, the new notion of a perturbed cone needs the assumption of \textit{not containing affine lines}.

\paragraph{Examples of perturbed cones}

The following are examples of perturbed cones in the Euclidean space.

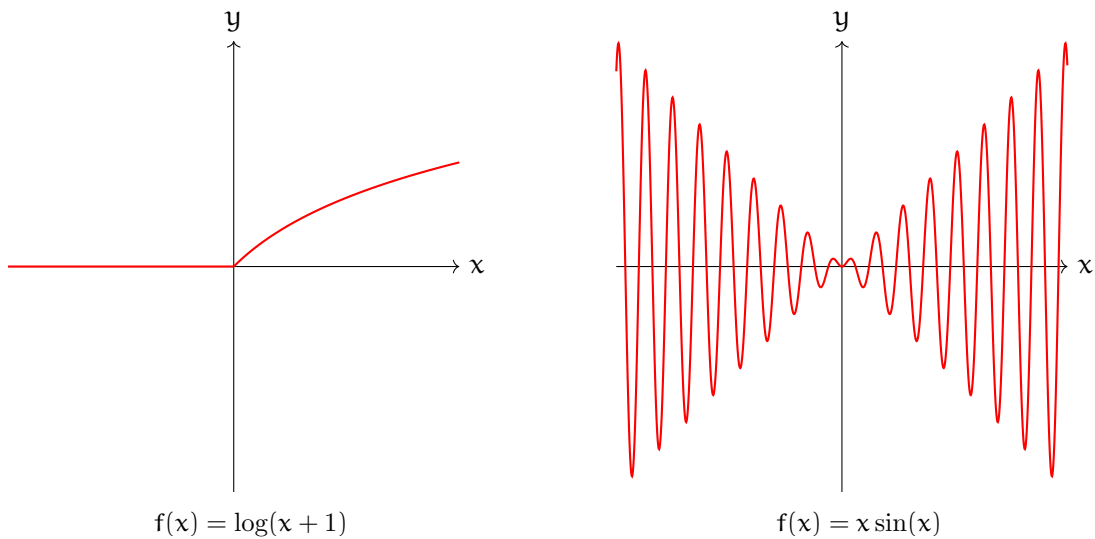
\begin{figure}[htbp]
    \captionsetup[subfigure]{labelformat=empty}
	\centering
	\begin{subfigure}[b]{0.45\textwidth}
		\centering
		\begin{tikzpicture}[scale=1]
			\draw[->] (-3,0) -- (3,0) node[right]{$x$};
			\draw[->] (0,-3) -- (0,3) node[above]{$y$};
			\draw[red,thick, domain=-3:0, samples=100] plot (\x, {0});
			\draw[red,thick, domain=0:3, samples=100] plot (\x, {ln(\x+1)});
		\end{tikzpicture}
		\caption{$f(x)=\log(x+1)$}
	\end{subfigure}
	\hfill
	\begin{subfigure}[b]{0.45\textwidth}
		\centering
		\begin{tikzpicture}[scale=1]
			\draw[->] (-3,0) -- (3,0) node[right]{$x$};
			\draw[->] (0,-3) -- (0,3) node[above]{$y$};
			\draw[red,thick, domain=-3:3, samples=1000] plot (\x, {\x * sin(1000*\x)});

		\end{tikzpicture}
		\caption{$f(x)=x\sin(x)$}
	\end{subfigure}
	\caption{Examples of perturbed cones}
\end{figure}

\begin{enumerate}[label=(\alph*)]
\item Let $C\subseteq \R^2\cong \C$ be defined by the union of the negative $x$-axis and the line $e^{i\theta} \R_{\geq 0}$ for some angle $\theta \in (0,\pi)$. Notice that for $\theta = 0$ this is not a perturbed cone, since there are geodesics in the upper half-plane.
\item A family of perturbed cones in $\R^2$ can be defined by glueing the negative $x$-axis and the graph of any continuous, injective, unbounded function $f:[0,\infty) \longrightarrow [0,\infty)$ with $f(0)=0$. For example, $f(x)=\log(x+1)$. This can be seen as a perturbation of the cone defined in (a). Note that the example using the logarithm is not contained in any classical cone with an angle smaller than $\pi$, since the convex hall of the graph is the upper halfspace.
\item The graph of $f(x)= x\sin(x)$ in $\R^2$ is an example such that both components of $\R^2\setminus \mathrm{gr}\,(f)$ are cone regions.
\item Let $p,v \in \R^n$ with $v \neq 0$ and an angle $0<\theta<\pi/2$. Then the boundary of the cone $$ C(p,v,\theta) = \left\{ p + x \, \mid \, x \in \R^n, |\langle x,v \rangle| \leq \cos(\theta) \|x\|\|v\| \right\} $$ obviously defines a perturbed cone with its inside being the cone region.
\item Let $K$ be a compact set with $K^\circ = K \setminus \partial K \neq \emptyset$. Then $K$ is a perturbed cone with cone region $K^\circ$.
\end{enumerate}

Next, we prove the main theorem of this subsection.

\begin{thm}[Perturbed cone theorem]\label{thm:perturbed_wedge}
Let $C$ be a perturbed cone in $\R^n$. Then every proper harmonic map from a complete Riemannian manifold $(M,g)$ inside a cone region of $C$ is constant.
\end{thm}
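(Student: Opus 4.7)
The plan is to reduce the statement to Corollary~\ref{cor_liouville} by proving that $f(M)$ is contained in the precompact region $B$ furnished by the enclosing property at a single point. Fixing $p_0\in M$ and setting $q_0=f(p_0)$, I would use the enclosing property at $q_0$ to produce an affine hyperplane $H$ and the precompact connected component $B$ of $R\setminus H$ containing $q_0$. After a rigid motion one may assume $H=\{x_n=0\}$ and $B\subset\{x_n>0\}$, so that $u:=e_n\cdot f$ is a harmonic real-valued function on $M$ with $u(p_0)>0$.

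The central object will be $U_0$, the connected component of the open set $f^{-1}(B)$ containing $p_0$. Its closure satisfies $\overline{U_0}\subset f^{-1}(\overline{B})$, which is compact by properness of $f$ (since $\overline{B}$ is compact). The key topological step is to establish $f(\partial U_0)\subset H$: for $p\in\partial U_0$, continuity gives $f(p)\in\overline{B}$, and if $f(p)$ were in $B$ then $p$ would lie in some component $V$ of the open set $f^{-1}(B)$; since $M$ is locally path-connected $V$ is open, and $p\in\overline{U_0}$ forces $V\cap U_0\neq\emptyset$, hence $V=U_0$, contradicting $p\notin U_0$. Therefore $f(p)\in\partial B\subset H\cup\partial R$, and since $f(p)\in R$ while $\partial R\cap R=\emptyset$, one obtains $f(p)\in H$ and $u(p)=0$.

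With this in hand, suppose for contradiction that $U_0\neq M$; connectedness of $M$ then forces $\partial U_0\neq\emptyset$. The harmonic function $u$ is continuous on the compact set $\overline{U_0}$ and vanishes on its boundary, so the classical weak maximum principle forces $u\leq 0$ on $\overline{U_0}$, contradicting $u(p_0)>0$. Hence $U_0=M$, so $f(M)\subset B$ is a bounded subset of $\mathbb{R}^n$, and Corollary~\ref{cor_liouville} directly yields that $f$ is constant.

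The main obstacle is the topological verification that $f(\partial U_0)\subset H$: it crucially exploits that $B$ is a connected \emph{component} of $R\setminus H$ (so that limits of $B$-points staying in $R$ must lie in $H$) together with the local path-connectedness of $M$ to rule out $f(p)\in B$ for $p\in\partial U_0$. Everything else reduces to the standard weak maximum principle applied to the harmonic function $u$, followed by a direct appeal to the Liouville theorem proven in the preceding section.
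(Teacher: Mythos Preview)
Your proof is correct, but it follows a genuinely different route from the paper's argument. The paper applies the foliated maximum principle (Theorem~\ref{thm_renanito}) directly: given $q=f(p)\in B$, it builds a strictly convex foliation by translating a half-sphere of radius $\mathrm{diam}(B)$ along the inward normal $\nu$ to $H$, arranged so that $q$ sits on a leaf whose concave side contains all of $B$; Theorem~\ref{thm_renanito}(a) then forces the image to exit through the concave boundary, contradicting $f(M)\subset R$.

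Your approach instead exploits the linear structure of $\R^n$: the coordinate $u=\langle e_n,f\rangle$ is a scalar harmonic function, and the enclosing property plus properness let you trap a component $U_0$ of $f^{-1}(B)$ in a compact set with $u|_{\partial U_0}=0$, so the classical strong maximum principle does the work. This is more elementary---it bypasses Sampson's maximum principle and the foliation machinery entirely for the main step, relying only on the scalar maximum principle and then Corollary~\ref{cor_liouville} (in fact you could finish even more directly: once $f(M)\subset B$, properness gives $M\subset f^{-1}(\overline{B})$ compact, hence $f$ constant). The price is that your argument is specific to Euclidean targets, where linear projections of harmonic maps are harmonic; the paper's foliation argument, by contrast, is the template for the Riemannian cone theorem in Section~\ref{RiemannSec}, where no such linear coordinates exist.
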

\begin{proof}
Let $f:M\rightarrow \R^n$ be a non-constant proper harmonic map and a point $p\in M$ chosen such that $q=f(p)$ lies in a cone region of $C$. Take now the associated enclosing compact set $B$ and the hyperplane $H$ such that $q\in B^\circ$.

\begin{minipage}[b]{0.55\textwidth}
Let $\nu$ be the unit normal with respect to $H$ pointing inwards to $B^\circ$ and the half-sphere $$ S = S_r^{n-1} \cap \{x\in \R^n\,\mid\, 0\leq \langle x,\nu \rangle\} $$ where $S_r^{n-1}$ is the sphere of radius $r = \mathrm{diam}(B)$. Then for $\gamma:(-r-\varepsilon,r+\varepsilon) \rightarrow \R^n$ defined by $\gamma(t)=q + t\, \nu$ for $\varepsilon>0$, we obtain an associated strictly convex foliation $$\mathcal{F} = \bigcup_{t \in (-r-\varepsilon,r+\varepsilon)}\mathcal{L}_{\gamma(t)}$$ with leafs $\mathcal{L}_{\gamma(t)}=\gamma(t)+S$. 
\end{minipage}
\begin{minipage}[b]{0.4\textwidth}
    \centering
    \begin{tikzpicture}[scale=1]
			\draw [blue,dotted,thick,domain=0:180] plot ({2*cos(\x+215)-1}, {2*sin(\x+215)+3.75});
			\draw [blue,dotted,thick,domain=0:180] plot ({2*cos(\x+215)-0.666}, {2*sin(\x+215)+3.25});
			\draw [blue,dotted,thick,domain=0:180] plot ({2*cos(\x+215)-0.333}, {2*sin(\x+215)+2.75});
			\draw [blue,dotted,thick,domain=0:180] plot ({2*cos(\x+215)}, {2*sin(\x+215)+2.25});
			\draw [blue,dotted,thick,domain=0:180] plot ({2*cos(\x+215)+0.333}, {2*sin(\x+215)+1.75});
			\draw [blue,dotted,thick,domain=0:180] plot ({2*cos(\x+215)+0.666}, {2*sin(\x+215)+1.25});
			\draw [-,thick] (-2, 2) -- (2, 0);
			\draw [-,thick] (1, 4) -- (2, 0) node[right]{$C$};
			\draw [-,dashed] (-2, 2) -- (1, 4);
			\draw[fill=black] (-1,3) circle (0.0) node[above]{$H$};
			\draw[fill=black] (0,2) circle (0.04) node[above]{$q$};
			\draw [->,thick] (0, 2) -- (0.333, 1.5) node[right]{$\nu$};
	\end{tikzpicture}

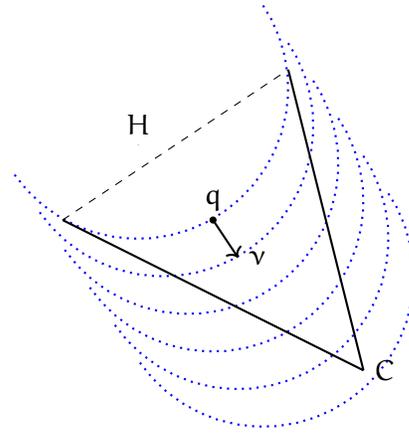
\captionof{figure}{The constructed foliation}
\end{minipage}\\
By definition, $q\in \mathcal{L}_{\gamma(-r)}$ and $\partial B \cap H$ lies on the convex side of $\mathcal{L}_{\gamma(-r)}$. Hence, by Theorem \ref{thm_renanito} (a) we obtain a contradiction of $\mathrm{Im}\, f$ being contained in a cone region of $C$.
\end{proof}

\paragraph{Local cones}

A corollary derived from the proof of our main theorem, which can be interpreted as a local manifestation of a cone theorem, goes as follows. 

\begin{figure}[h] \label{Fig:localwedpic}
	\centering 
	\begin{tikzpicture}[scale=2]
		\draw[-,thick] (-2,1) -- (-1,1);
		\draw[-,thick] (-1,1) -- (0,0.25);
		\draw[-,thick] (0,0.25) -- (1,1);
		\draw[-,thick] (1,1) -- (2,1) node[right]{$y=c$};
		\draw[-,dashed ,thick] (-0.93,1) -- (0.94,1);
		\draw[fill=black] (-0.3,0.8) circle (0.02) node[right]{$q\in f(M)$};
		\draw[<->] (-2,0) -- (2,0) node[below right=0.08cm and -1cm]{$x$};
	\end{tikzpicture}
	\caption{A local cone}
\end{figure}
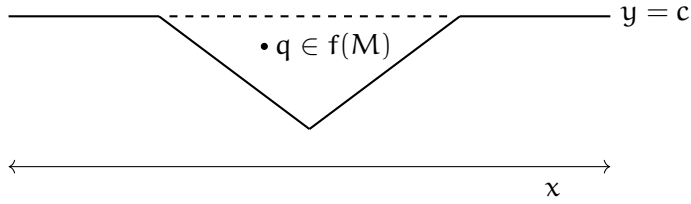

\begin{defn}
    A \textbf{local cone} $C$ in $\R^n$ is a closed subset such that there is a hyperplane $H$ and a precompact connected component $B$ of $\R^n\setminus(C \cup H)$. $B$ is called the \textbf{local cone region}.
\end{defn}
Notice that a local cone can always be extended to a cone in several non-unique ways.

\begin{cor}[Local cone theorem]
		Let $C$ be a local cone with cone region $B$. Then a non-constant proper harmonic map $f:M \rightarrow \R^n$ with $f(p) \in B$ also possesses a point $q\in M$ such that $f(q)\in C$. In other words, a harmonic map entering $B$ has to leave through $C$.
\end{cor}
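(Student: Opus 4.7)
The plan is to argue by contradiction, closely following the proof of Theorem~\ref{thm:perturbed_wedge}. Suppose $f: M \to \R^n$ is a non-constant proper harmonic map with $q := f(p) \in B$ but $f(M) \cap C = \emptyset$. Since $f(M)$ is connected, contains $q \in B$, and misses the closed set $C$, the image is contained in the single connected component $R$ of $\R^n \setminus C$ containing $B$.

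Next, I mimic the construction from Theorem~\ref{thm:perturbed_wedge} verbatim, using the hyperplane $H$ supplied by the definition of the local cone. Let $\nu$ be the unit normal to $H$ pointing into $B$, set $r = \mathrm{diam}(B)$, and build the half-sphere foliation
$$ \mathcal{F} = \bigcup_{t \in (-r-\varepsilon, r+\varepsilon)} \mathcal{L}_{\gamma(t)}, \qquad \gamma(t) = q + t\nu, \qquad \mathcal{L}_{\gamma(t)} = \gamma(t) + S, $$
where $S = S_r^{n-1} \cap \{x : \langle x, \nu \rangle \geq 0\}$. Exactly as in Theorem~\ref{thm:perturbed_wedge}, $q$ sits on the leaf $\mathcal{L}_{\gamma(-r)}$, and $\partial B \cap H$ lies on the convex side of this leaf. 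Since $\mathcal{F}$ is a bounded subset of $\R^n$, alternative (b) of Theorem~\ref{thm_renanito} is ruled out, and alternative (a) must hold: there exists a finite-distance limit point $q^* \in \partial\mathcal{F}_{>q}$ which lies in $f(M)$, produced by the iterated Sampson construction.

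The main obstacle, and the only new content beyond Theorem~\ref{thm:perturbed_wedge}, is the identification $q^* \in C$, which contradicts the assumption $f(M) \cap C = \emptyset$ and concludes the argument. Since $\partial B \subseteq H \cup C$ by the definition of a local cone, and since $q^* \in f(M) \subseteq R$, it suffices to exclude the possibility $q^* \in H$. The essential geometric observation is that the half-ball bounded by $\mathcal{L}_{\gamma(-r)}$ contains $\partial B \cap H$ strictly on its convex side; therefore the subset of $H$ accessible on the concave side of $\mathcal{L}_{\gamma(-r)}$ (i.e.\ within $\mathcal{F}_{>q}$) is disjoint from $\partial B \cap H$. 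The iterated Sampson sequence lives on leaves with strictly increasing parameter $t$, hence in $\mathcal{F}_{>q}$, and so its accumulation $q^*$ cannot lie on the $H$-part of $\partial B$. Thus $q^* \in \partial B \setminus H \subseteq C$, yielding the contradiction.
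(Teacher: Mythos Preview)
Your proof is correct and takes essentially the same approach as the paper: the paper's own argument is a single sentence observing that, since $B$ is open and $q=f(p)\in B$, one may repeat verbatim the half-sphere foliation construction from Theorem~\ref{thm:perturbed_wedge}. You carry this out explicitly and additionally spell out why the exit point $q^*$ produced by Theorem~\ref{thm_renanito}(a) must lie on $C$ rather than on $H$, a point the paper leaves implicit in its appeal to the earlier proof.
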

\begin{proof}
    Since $B$ is an open set and $q \in B$, we can just repeat the construction in the proof of Theorem \ref{thm:perturbed_wedge}.
\end{proof}

\subsection{The horosphere theorem}\label{HoroSec}

To further demonstrate the flexibility of our methods on obtaining perturbed cone theorems, we proceed to explore hyperbolic upper halfspaces. We exhibit a 1-parameter family of hypersurfaces that foliates the space above a horosphere.  More precisely and with the nomenclature of our paper, we consider the upper halfspace model of the hyperbolic space $\mathbb{H}^{n+1} = \{ x \in \R^{n+1} \, \mid \, x_{n+1} > 0 \}$ equipped with the metric $ds^2 = \tfrac{1}{x_{n+1}^2}(dx_1^2 + \dots + dx_{n+1}^2)$ of constant sectional curvature $-1$.
We will show that a cone in the region $\{x_{n+1}>c>0\}$ in the hyperbolic space $\mathbb{H}^{n+1}$ can be as wide as a halfspace; that is a cone with angle $\pi$.

\paragraph{Geometry of graphical hypersurfaces in $\mathbb{H}^{n+1}$} We shall briefly discuss the geometry of hypersurfaces of $\mathbb{H}^{n+1}$. Let $F:\R^n\rightarrow \mathbb{H}^{n+1}$ be a graphical hypersurface $F(x)= (x,f(x))$ for some smooth $f:\R^n \rightarrow \R$. The tangent vectors are spanned by $F_i = \partial_i + f_i\partial_{n+1}$. Denote by $\nabla f = (f_1,\dots,f_n)$ the Euclidean gradient and by $\|\nabla f\|$ its Euclidean norm. Then we can choose the upward pointing unit normal 

\begin{equation*}
        N = \frac{f}{\sqrt{1+\|\nabla f\|^2}} \begin{bmatrix}- \nabla f \\ 1\end{bmatrix}
\end{equation*}

The covariant derivatives with respect to the ambient hyperbolic metric are
$$
\nabla_{F_i}F_j = f^{-1}((\delta_{ij}+ff_j -f_if_j) \partial_{n+1} -f_i\partial_j-f_j\partial_i)
$$
which then implies that the second fundamental form is
\begin{equation}\label{eq of A}
    A = \frac{1}{f^2\sqrt{1+\|\nabla f\|^2}}(\mathrm{Id} + \nabla f \otimes \nabla f + f \, \mathrm{Hess}(f))
\end{equation}

where $\mathrm{Hess}(f)$ is the Euclidean Hessian and $[\nabla f \otimes \nabla f]_{ij}=f_if_j$. Thus, for the convexity of the hypersurfaces we only need to investigate the positive definiteness of $A$.
\begin{figure}[h]
	\centering
	\begin{tikzpicture}[scale=2]
		\def\r{2} 
		\def\angEl{35} 
		\def\angAz{-105} 
		
		\draw [black,ultra thick,domain=30:150] plot ({2*cos(\x)}, {2*sin(\x)-1}) node[above right=0.65cm and 6.5cm]{$S$};
		\fill [ball color=gray,opacity=0.4,domain=30:150] plot ({2*cos(\x)}, {2*sin(\x)-1});
		\draw[red, thick, dashed] (1.725,0) arc (10:170:1.75 and 0.45);
		\fill[ball color=gray,opacity=0.4] (-1.745,0) arc (180:360:1.745 and 0.6);
		\draw[red, thick] (-1.745,0) arc (180:360:1.745 and 0.6);
		
		\draw[thick,-stealth] (0,0,-1.5) -- (0,0,2.5); 
		\draw[thick,-stealth] (-2.3,0,0) -- (2.5,0,0); 
		\draw[thick,-stealth] (0,-1.5,0) -- (0,1.5,0) node[right]{$x_{n+1}$}; 
	\end{tikzpicture}
	\caption{$S$ defined by $f(x)=\sqrt{4q^2-\|x\|^2}-q$}
\end{figure}
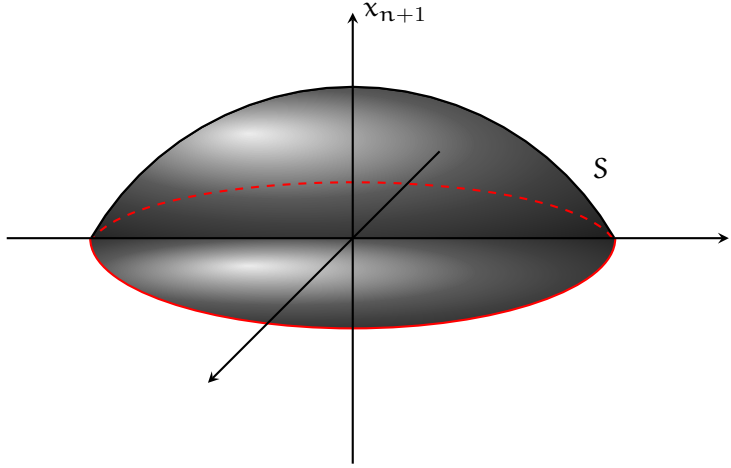

\paragraph{Spheres beyond infinity}
Our aim is to construct a foliation of an open set in $\mathbb{H}^{n+1}$.
Such a foliation and the foliated maximum principle will lead to the \textit{Horosphere theorem}~\ref{thm:horoshphere}.
Such a foliation will consist of graphical hypersurfaces in $\mathbb{H}^{n+1}$ whose convexity can be investigated by making use of Equation~\ref{eq of A}.
Let $f:\R^n \rightarrow \R$ be given by $f(x) = \sqrt{4q^2-\|x\|^2}-q$ for some $q>0$.
Consider the embedding $F:U\to \mathbb{H}^{n+1}$ where $U=\{x\in\R^n\,|\,f(x)>0$\} with $F(x)=(x,f(x))$. 
The defined submanifold is the piece of the Euclidean sphere of radius $2q$ and centre $(0,\dots,0,-q)$ as a subset of $\mathbb{H}^{n+1}$. The gradient is $\nabla f = \frac{-x}{f(x)+q}$ and the Hessian is $$\mathrm{Hess}(f) = -\frac{\mathrm{Id}}{f(x)+q}  - \frac{x \otimes x}{(f(x)+q)^3} $$ where $[x\otimes x]_{ij} = x_ix_j$. A short computation shows that the second fundamental form is
$$
A = \frac{1}{2f(x)} \left( \mathrm{Id} + \frac{x\otimes x}{(f(x)+q)^2}  \right).
$$
The positive definiteness of $A$ follows by noticing that a matrix of the form $\mathrm{Id} + \alpha^2 x\otimes x$ has the eigenvalue $1+ \alpha^2\|x\|^2$ with multiplicity one and the eigenvalue $1$ with multiplicity $n$.

\begin{thm}[Horosphere theorem]\label{thm:horoshphere}
Let $H = \{x\in\mathbb{H}^{n+1}\,\mid\, x_{n+1}=c\}$ for some $c>0$ be a horosphere in the hyperbolic space $\mathbb{H}^{n+1}$. Then every proper harmonic map into the set $\{x\in\mathbb{H}^{n+1}\,\mid\, x_{n+1}>c\}$, which is the region above the horosphere $H$, is constant.
\end{thm}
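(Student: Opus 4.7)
The plan is to build a strictly convex foliation of $\{x_{n+1} > c\}$ out of the hyperbolic spheres whose strict convexity was just established, and then to invoke the foliated maximum principle (Theorem~\ref{thm_renanito}). The crucial feature of the construction is that, for the base point $q = f(p_0)$ inside the foliation, the set $\partial \mathcal{F}_{>q}$ will lie entirely on the horosphere~$H$, so that a proper harmonic map whose image stays strictly above $H$ will fail to satisfy either alternative (a) or (b) of that theorem.

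Concretely, for each $q > c$ I would set $L_q := S_q \cap \{x_{n+1} > c\}$, where $S_q$ is the graph of $x \mapsto \sqrt{4q^2 - \|x\|^2} - q$, namely the portion of the Euclidean sphere of radius $2q$ centered at $(0, \dots, 0, -q)$ sitting in $\mathbb{H}^{n+1}$. Solving $\|x_0\|^2 + (y_0 + q)^2 = 4 q^2$ for $q > 0$ produces the unique value $q = \tfrac{1}{3}\bigl(y_0 + \sqrt{4 y_0^2 + 3\|x_0\|^2}\bigr)$, which always exceeds $y_0 > c$; hence $\{L_q\}_{q > c}$ is a foliation of $\{x_{n+1} > c\}$, with strict convexity coming from the formula for $A$ above and the separating property from the nesting of the Euclidean balls $B((0,-q), 2q)$ in $q$. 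Since $A > 0$ with respect to the upward-pointing unit normal $N$, the paper's sign convention identifies the convex side of each $L_q$ with the Euclidean \emph{exterior} of its ball and the concave side with its Euclidean \emph{interior}.

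Assuming for contradiction that $f : M \to \{x_{n+1} > c\}$ is a non-constant proper harmonic map, I would apply a horizontal translation (a hyperbolic isometry) to reduce to $f(p_0) = (0, y_0)$ with $y_0 > c$, so that $f(p_0)$ sits at the Euclidean apex of $L_{y_0}$. Unwinding the order $<$ then identifies
\[
\mathcal{F}_{>f(p_0)} \;=\; \{(x,y) : y > c,\ \|x\|^2 + (y + y_0)^2 < 4y_0^2\},
\]
whose concave boundary $\partial \mathcal{F}_{>f(p_0)} = \{(x,c) : \|x\|^2 \leq (3y_0 + c)(y_0 - c)\}$ lies entirely inside $H$. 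Alternative (b) of Theorem~\ref{thm_renanito} is then ruled out because on $\{x_{n+1} \geq c\}$ the hyperbolic metric is controlled by $c^{-2}$ times the Euclidean metric, so the Euclidean-bounded closure $\overline{\mathcal{F}_{>f(p_0)}}$ has finite hyperbolic diameter. Thus alternative (a) must apply, producing $q^{*} \in f(M) \cap \partial \mathcal{F}_{>f(p_0)} \subseteq H$, which contradicts $f(M) \subseteq \{x_{n+1} > c\}$.

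The step I expect to be most delicate is the orientation analysis that places the concave side of each $L_q$ on the Euclidean interior of its ball, rather than on its exterior. This reversal with respect to the Euclidean convexity picture is a genuine hyperbolic phenomenon stemming from the curving of hyperbolic geodesics, and it is precisely what forces the foliated maximum principle to push the image of $f$ downward onto the horosphere instead of outward to hyperbolic infinity.
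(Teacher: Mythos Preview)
Your proof is correct and follows essentially the same approach as the paper: both use the one-parameter family of Euclidean spheres $\|x\|^2+(y+q)^2=4q^2$ whose strict hyperbolic convexity was computed just above, and both finish by invoking Theorem~\ref{thm_renanito}. The only cosmetic difference is that the paper keeps the full graphical caps over $\{x_{n+1}>0\}$ and lets the parameter run over $t\in[\varepsilon,q]$ with $\varepsilon<c$ (so the concave boundary is the cap $L_\varepsilon$, which lies entirely below height $c$), whereas you truncate each cap at $\{x_{n+1}>c\}$ and let $q$ run over $(c,\infty)$ (so the concave boundary sits on $H$ itself); your truncation makes the exclusion of alternative~(b) slightly more explicit, but the underlying geometry and the orientation analysis you highlight are identical to the paper's.
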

\begin{proof}
	Let $f$ be a proper harmonic map and $p\in M$ such that $q'=f(p)$ lies above the horosphere $H$.
	By a hyperbolic isometry, we can assume that $q' = (0,\dots,0,q)$ for some $q>c$. Define the foliation $\mathcal{F}$ by the maps $f_t(x) = \sqrt{4t^2-\|x\|^2}-t$ for $t\in [\varepsilon,q]$ for some $0<\varepsilon<c$.
	By the previous discussion, $\mathcal{F}$ is a strictly convex foliation, hence Theorem~\ref{thm_renanito} applies, implying that $f$ needs to intersect the horosphere of height $\varepsilon<c$.	Thus, $f$ can not lie above $H$.
\end{proof}

Let $c>0$ and $f:\R^{n} \rightarrow \R$ be a continuous function such that for all $x\in \R^{n}$ the inequality $-\varepsilon\leq f(x)$ holds for $0< \varepsilon < c$. Then the graph $\mathrm{gr}(f+c)$ in $\R^{n-1}\times (0,\infty) = \mathbb{H}^{n+1}$ defines a perturbed version of the horosphere. The same conclusion holds for this perturbed version as well.

\paragraph{Comments on the Horosphere theorem} We would like to highlight two important theorems that influenced the statement of Theorem~\ref{thm:horoshphere}. The first, by Rodriguez and Rosenberg \cite{rodriguez1998half}, proves a halfspace theorem for mean curvature one surfaces in the hyperbolic space $\mathbb{H}^3$, whose analogy with our result is obvious.

The second, by Mazet~\cite{mazet2013general}, covers a general halfspace theorem for constant mean curvature surfaces in 3-manifolds. This paper explores conditions allowing two surfaces with equal mean curvature to coexist in the same 3-space. Specifically: "If $\Sigma_H$ is a parabolic constant mean curvature (CMC) surface with mean curvature $H$ and any equidistant surface on the non-mean convex side has mean curvature less than $H$, then any CMC $H$ surface on the non-mean convex side of $\Sigma_H$ is an equidistant surface to $\Sigma_H$."
This general result, however, requires the target space to be three-dimensional and the domain to be parabolic, which limits its application in higher dimensions; for example, the Euclidean space $\R^k$ with a flat metric is parabolic if and only if $k=1$ or $k=2$. In contrast, our weaker theorem applies to proper harmonic maps without any restrictions on the dimensions of either the domain or the hyperbolic space target.

\section{A Riemannian cone theorem}\label{RiemannSec}
In this section, we move to the setting of Riemannian manifolds and prove a non-existence result for perturbed Riemannian cones.
Employing again a foliation argument, we will prove that the only proper harmonic maps into $R$ are the constant ones.

\paragraph{Riemannian halfspaces} As a first step, we need a generalised definition of a halfspace in a Riemannian manifold.
\begin{defn}
	Let $(N,h)$ be a complete Riemannian manifold. A \textbf{Riemannian halfspace} at $p\in N$ in the direction $v\in T_pN$ is $$H_{\gamma} := \bigcup_{0 < t} \overline{B_{t}(\gamma(t))}.$$ where $\gamma:[0,\infty)\rightarrow N$ is a unit-speed minimising geodesic ray $\gamma(t) = \exp_p(tv)$ starting at $p$ in the direction of $v$.
	
\end{defn}

Clearly, since the geodesic ray is minimising and of infinite length, $N$ must be non-compact. For example, a torus with an irrational angle line does not satisfy this property. Actually, the completeness is not strictly necessary. Take for example $\R^2\setminus (0,-1)$ with $p=(0,0)$ and $v = (0,1)$. Then we still obtain the usual halfspace although $\R^2\setminus (0,-1)$ is not complete with the Euclidean metric. Another example of a halfspace is a horosphere inside hyperbolic space. For this purpose, take $p=(0,\dots, 0,c) \in \mathbb{H}^n$ and $v = (0,\dots,0,1)$.

\paragraph{Perturbed Riemannian perturbed cones} Perturbed Riemannian cones are going to be an abstraction of the idea of a cone being a subset inside a halfspace defined by a point, a direction and a solid angle.

\begin{defn}\label{defn:riemanniancone}
	Let $\gamma:[0,\infty)\rightarrow N$ be a unit-speed \textbf{minimising geodesic} ray and $r:(0,\infty)\rightarrow (0,\infty)$ a continuous function satisfying $r(t)<\min\{t, r_c(\gamma(t))\}$ where $r_c(p)$ is the convexity radius of $N$ at $p$.
	The subset
	$$
	C_{\gamma,r} := \bigcup_{0<t} \overline{B_{r(t)}(\gamma(t))}.
	$$
	is a \textbf{perturbed Riemannian cone} if the following separation property holds: For every $t' \in (0,\infty)$ the set $C_{\gamma,r} \setminus \overline{B_{r(t)}(\gamma(t'))}$ consists of two disjoint connected components. The function $r$ is referred to as the \textbf{cone radius function}.
\end{defn}

The property $r(t)<t$ assures that the perturbed Riemannian cone $C_{\gamma,r}$ is a subset of the Riemannian halfspace $H_\gamma$ and the property $r(t) < r_c(\gamma(t))$ is necessary for $S_{r(t)}(\gamma(t)):= \partial B_{r(t)}(\gamma(t))$ being strictly convex. Thus, a Riemannian perturbed cone intrinsically possesses a strictly convex foliation by the boundaries of geodesic balls. Note that cones in Euclidean space are a canonical example of a perturbed Riemannian cone, where in addition the radius function $r$ has constant derivative smaller than 1.
\\
The following lemma characterises when geodesic balls along a minimising geodesic rays are contained in each other, i.e. when the separating property of a cone fails. Since the proof relies on the mean value theorem, we will assume differentiability for the cone radius function.

\begin{lemma}
	Let $\gamma:[0,\infty)\rightarrow N$ be a unit-speed minimising geodesic ray and $r:[0,\infty)\rightarrow [0,\infty)$ a continuously differentiable function satisfying $r(0)=0$ and $r(t)<t$. Fix an open subset $(a,b)\subseteq (0,\infty)$. The following are equivalent:
	\begin{enumerate}
		\item $r'\geq1$ on $(a,b)$ and
        \item $B_{t_1}\subseteq B_{t_2}$ for all $a<t_1<t_2<b$ where $B_t = \overline {B_{r(t)}(\gamma(t))}$ is the closed geodesic ball in $N$.
	\end{enumerate} 
\end{lemma}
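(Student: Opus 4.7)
The plan is to reduce the inclusion statement in (b) to a slope condition on $r$, and then apply the mean value theorem to obtain (a).

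First I would establish the intermediate equivalence: for $a < t_1 < t_2 < b$, the inclusion $B_{t_1} \subseteq B_{t_2}$ holds if and only if
\[
    r(t_2) - r(t_1) \geq t_2 - t_1.
\]
Sufficiency of this inequality is immediate from the triangle inequality, since $\gamma$ being a unit-speed minimising ray gives $d(\gamma(t_1), \gamma(t_2)) = t_2 - t_1$, so any $p \in B_{t_1}$ satisfies $d(p, \gamma(t_2)) \leq r(t_1) + (t_2 - t_1) \leq r(t_2)$, placing $p$ in $B_{t_2}$.

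Next I would prove necessity by exhibiting an extremal point. Because $r(t_1) < t_1$, the parameter $s := t_1 - r(t_1) \in [0, t_1)$ lies in the domain of $\gamma$, so $p^* := \gamma(s) \in B_{t_1}$. The key observation is that $\gamma|_{[s, t_2]}$ is minimising (as a sub-arc of a minimising ray), which yields the exact identity $d(p^*, \gamma(t_2)) = t_2 - s = (t_2 - t_1) + r(t_1)$. The containment $B_{t_1} \subseteq B_{t_2}$ then forces $(t_2 - t_1) + r(t_1) \leq r(t_2)$, giving the slope inequality.

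With this equivalence in hand, the step to (a) is purely analytic: by the mean value theorem, together with the continuous differentiability of $r$, the slope inequality holding for every pair $a < t_1 < t_2 < b$ is equivalent to $r'(t) \geq 1$ for every $t \in (a, b)$. The forward direction follows by applying MVT on each subinterval; the converse by taking the limit of $\bigl(r(t+h) - r(t)\bigr)/h$ as $h \to 0^+$.

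The main obstacle I anticipate is the necessity direction of the intermediate equivalence, which relies on two structural hypotheses of Definition~\ref{defn:riemanniancone}: the bound $r(t_1) < t_1$, so that $p^* = \gamma(t_1 - r(t_1))$ lies in the domain of $\gamma$; and the global minimising property of $\gamma$, so that the a priori upper bound from the triangle inequality can be replaced by the exact distance $d(p^*, \gamma(t_2)) = t_2 - s$. If $\gamma$ failed to be minimising past $t_1$, one would only obtain an inequality, and the sharpness needed to extract (a) from (b) would be lost.
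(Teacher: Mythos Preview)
Your proposal is correct and follows essentially the same route as the paper: both directions hinge on the extremal point $p^*=\gamma(t_1-r(t_1))$ together with the fact that $\gamma$ is unit-speed minimising, so that $d(p^*,\gamma(t_2))=t_2-t_1+r(t_1)$ exactly; the paper carries out the two implications directly, while you factor through the intermediate slope inequality $r(t_2)-r(t_1)\ge t_2-t_1$, but the mathematical content is identical.
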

\begin{proof}
Assuming (a) the mean value theorem implies $t_2-t_1\leq r(t_2)-r(t_1)$. Let $p \in B_{t_1}$, that is $d(p,\gamma(t_1))\leq r(t_1)$. Now
\begin{align*}
    d(p,\gamma(t_2)) &\leq d(p,\gamma(t_1)) + d(\gamma(t_1),\gamma(t_2))
    \\ &\leq r(t_1)+ t_2 - t_1
    \\ &\leq r(t_1) + r(t_2)-r(t_1)
    \\ &= r(t_2).
\end{align*}
Notice that the second inequality follows by $\gamma$ being a unit speed minimising geodesic. Hence, $p\in B(t_2)$ and (b) follows.

Assuming (b) we obtain $d(\gamma(t_2),\gamma(t_1-r(t_1))) \leq r(t_2)$ and by the properties of $\gamma$ we get $d(\gamma(t_2),\gamma(t_1-r(t_1))) = t_2-t_1+r(t_1)$. Rearranging implies that, $$1 \leq \frac{r(t_2) - r(t_1)}{t_2-t_1}$$ and assertion (a) follows.
\end{proof}

\begin{thm}[Perturbed Riemannian cone theorem]\label{thm:cone_theorem}
	Let $C_{\gamma,r}$ be a perturbed Riemannian cone inside the complete Riemannian manifold $(N,h)$. Then every proper harmonic maps into $C_{r,\gamma}\setminus \partial C_{r,\gamma}$ is constant.
\end{thm}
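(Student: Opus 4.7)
The plan is to parallel the proof of Theorem~\ref{thm:perturbed_wedge}: produce a strictly convex foliation of (a portion of) the interior of $C_{\gamma,r}$ whose concave boundary is forced to lie on $\partial C_{\gamma,r}$, and then invoke the foliated maximum principle (Theorem~\ref{thm_renanito}) to rule out any non-constant proper harmonic map.

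First I would assume for contradiction that a non-constant proper harmonic map $f:M\to C_{\gamma,r}\setminus\partial C_{\gamma,r}$ exists, and fix $p\in M$ with $q=f(p)$ in the interior of the cone. The natural candidate foliation is $\mathcal{F}=\{\mathcal{L}_t\}_{t>0}$ with $\mathcal{L}_t:=\partial B_{r(t)}(\gamma(t))$. Each $\mathcal{L}_t$ is strictly convex, because the condition $r(t)<r_c(\gamma(t))$ guarantees that a geodesic sphere of that radius has positive-definite second fundamental form with respect to the inward unit normal; its convex side is the open ball $B_{r(t)}(\gamma(t))$. The separating requirement built into the definition of a strictly convex foliation is exactly the separating property encoded in Definition~\ref{defn:riemanniancone}. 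Choosing $t_0>0$ so that $q$ lies on (or, after applying Sampson's principle Theorem~\ref{thm_sampson}, can be replaced by a nearby image point lying on) the leaf $\mathcal{L}_{t_0}$, we are in position to apply Theorem~\ref{thm_renanito}.

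In Case (a) of Theorem~\ref{thm_renanito}, $f$ would leave $\mathcal{F}$ at some $q^*\in\partial\mathcal{F}_{>q}$ within finite distance. Since $\{\mathcal{L}_t\}$ foliates the interior of $C_{\gamma,r}$, the separating property forces the only portion of $\partial\mathcal{F}_{>q}$ not contained in a neighbouring leaf to lie on the geometric boundary of the cone, so $q^*\in\partial C_{\gamma,r}$. This directly contradicts $f(M)\subseteq C_{\gamma,r}\setminus\partial C_{\gamma,r}$.

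The main obstacle is Case (b), since the cone generically extends to infinity along $\gamma$ and the full family $\mathcal{F}$ is unbounded. I would overcome this by passing to a bounded sub-foliation: for suitably chosen $0<a<t_0<b<\infty$ the restricted family $\{\mathcal{L}_t\}_{t\in[a,b]}$ is bounded in $N$, ruling out alternative (b). The price is that the concave boundary of the sub-foliation now potentially includes the end-cap leaves $\mathcal{L}_a$ and $\mathcal{L}_b$, so one must exclude the possibility that $f$ exits through those spheres while remaining in the interior of the cone. This is handled by iterating the construction at a new base point on the offending end-cap (enlarging $b$ or shrinking $a$), and using properness of $f$ together with the separating property to conclude that after finitely many such steps, or in the limit, the image must actually cross $\partial C_{\gamma,r}$. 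This iteration, rather than the setup of the foliation itself, is the delicate step.
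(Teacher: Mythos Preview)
Your approach is the paper's approach: foliate the interior of $C_{\gamma,r}$ by the geodesic spheres $\mathcal{L}_t=\partial B_{r(t)}(\gamma(t))$, observe these are strictly convex because $r(t)<r_c(\gamma(t))$, note that the separating hypothesis in Definition~\ref{defn:riemanniancone} supplies the separating property of the foliation, and invoke Theorem~\ref{thm_renanito}. The paper's proof is exactly this, stated in two sentences; it does not isolate the alternatives (a) and (b) of Theorem~\ref{thm_renanito} at all, simply asserting that the theorem applies once the strictly convex foliation is in place.

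The one substantive divergence is your treatment of case~(b). The paper does not view the unbounded direction as an obstacle and offers no bounded-subfoliation-plus-iteration argument; it regards the conclusion as immediate from Theorem~\ref{thm_renanito}. Your instinct that something needs to be said here is reasonable---the cone is unbounded along $\gamma$, and a priori an escaping sequence $q_n$ with $d(q,q_n)\to\infty$ is not obviously incompatible with $f(M)\subset C_{\gamma,r}\setminus\partial C_{\gamma,r}$---but the iteration you sketch (``after finitely many such steps, or in the limit, the image must actually cross $\partial C_{\gamma,r}$'') is not a proof: nothing you have written forces termination, and passing to a limit just reproduces case~(b) of the full foliation. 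If you want to make this rigorous you need a genuine argument that the Sampson iteration, run inside the cone, cannot drift indefinitely toward $t\to\infty$ while staying in the interior; the paper does not supply one either, so you are not behind the paper here, but you should be aware that this is where the real work would lie rather than in setting up the foliation.
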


\begin{proof}
	The convexity radius assumption assures that the boundaries of the geodesic balls are strictly convex. Hence, by the above remarks we know that there is a strictly convex foliation present and Theorem~\ref{thm_renanito} applies.
\end{proof}

\paragraph{Applications of the perturbed Riemannian cone theorem} There are many possible applications of Theorem~\ref{thm:cone_theorem} to obtain new cone-type theorems for Riemannian geometry. We point out a couple of interesting ones to show the generality of the theorem in the Riemannian setting.

\begin{enumerate}
	\item Let $r(t) = \cos(\theta)t$ for a fixed angle $\theta \in (0,\pi/2)$ and $\gamma(t)=(0,t)$ with $t\in [0,\infty)$ inside $\R^2$. Then $C_{r,\gamma}$ is a perturbed Riemannian cone and $\partial C_{r,\gamma}$ is a perturbed Euclidean cone. Thus, Theorem~\ref{thm:cone_theorem} recovers the Theorem~\ref{thm:perturbed_wedge} for certain cones.
	\item In the case of hyperbolic spaces Theorem~\ref{thm:cone_theorem} is strictly weaker than Theorem~\ref{thm:horoshphere} since in that case we could prove the theorem for the halfspace, i.e. for the radius function $r(t)=t$.
	\item Let $M = \mathbb{H}^k\times \R^l$ equipped with the product metric. Since the convexity radius of $M$ is infinity, one can take any geodesic ray $\gamma$ and a radius function $r(t) = \cos(\theta)t$ with $\theta \in(0,\pi/2)$.
	\item Consider $M=(S^1)^n\times \R$ as $[-\pi , \pi]^n\times \R$ with opposite sides identified and equipped with the flat metric. The diffeomorphism of $\psi:\R^{n+1}\rightarrow (-\pi,\pi)^n\times \R$ given by $$(x_1,\dots,x_n,s) \mapsto (2\arctan(x_1),\dots,2\arctan(x_n),s)$$ allows the definition of Riemannian perturbed cones in $M$ via taking the image of a cone inside $\R^{n+1}$. For instance, let $C_{\gamma,r}\subseteq \R^{n+1}$ be defined by $\gamma(t)=(0,\dots,0,t)$ with $t\in [0,\infty)$ and $r(t) = \cos(\theta) t$ for $\theta \in (0,\pi/2)$. Then under $\psi$ the cone gets mapped to $C_{\gamma,R}$ inside $M$ where $\gamma$ is just like before and the transformed radius function is $R(t) = 2 \arctan(r(t)) = \arctan( \cos(\theta) t)$. 
	\item Let $M = S^n \times \R$ be equipped with the product metric. Denote by $S$ the south pole of $S^n$ and by $\gamma(t) = (S,t)$ with $t\in [0,\infty)$ the minimising geodesic. Since the convexity radius of $S^n$ is $\pi/2$ we need to stay below that value. Again, we can take the radius function $r(t)=\arctan(t)$ and define a Riemannian perturbed cone inside $M$.
\end{enumerate}

\bibliography{perturbedconeharmonic}
\addcontentsline{toc}{section}{\bibname}

\end{document}